\newtheorem{lem}{Lemma}
\newtheorem{thm}{Theorem}
\newcommand{\tr}[1]{\text{Tr}\left(#1\right)}
\newcommand{\trr}[1]{\text{Tr}^2\left(#1\right)}
\newcommand{\EE}[1]{{\text{\normalsize$\mathbb E$}}\left(#1\right)}
\newcommand{\PP}[1]{{\text{\normalsize$\mathbb P$}}\left(#1\right)}
\newcommand{\Mmu}{M_\mu}
\newcommand{\Id}{I_{\mathcal{H}}}
\newcommand{\K}{\boldsymbol{K}}
\newcommand{\X}{\boldsymbol{\Xi}}
\newcommand{\heta}{\overline{\eta}}
\newcommand{\hp}{\overline{p}}
\newcommand{\rhob}{\overline{\rho}}
\newcommand{\xib}{\overline{\xi}}
\newcommand{\dy}{{\dif{y}}}
\newcommand{\dt}{{\dif{t}}}
\newcommand{\Dt}{{\Delta t}}
\newcommand{\Dy}{\Delta{y}}
\newcommand{\RR}{\mathbb{R}}
\newcommand{\Ito}{It$\bar{\text{o}}$}
\newcommand{\nmax}{N}
\title{Parameter estimation  from  measurements  along quantum trajectories\footnote{This work has been partly funded by the Idex PSL * under the grant ANR-10-IDEX-0001-02 PSL *, by the Emergences program of Ville de Paris under the grant Qumotel and by the Projet Blanc ANR-2011-BS01-017-01 EMAQS.}}
\author{
P. Six \footnote{ Centre Automatique et Syst\`{e}mes, Mines-ParisTech, PSL Research University.
60, bd Saint-Michel 75006 Paris.}
\and
 Ph. Campagne-Ibarcq\footnote{Laboratoire Pierre Aigrain, Ecole Normale Sup\'erieure-PSL Research University, CNRS, Universit\'e Pierre et Marie Curie-Sorbonne Universit\'es, Universit\'e Paris Diderot-Sorbonne Paris Cit\'e, 24 rue Lhomond, 75231 Paris Cedex 05, France}
 \and
 L. Bretheau\footnotemark[3]
 \and
 B. Huard\footnotemark[3]
 \and
P.  Rouchon \footnotemark[2]}%
\begin{document}
\maketitle
\begin{abstract}

The dynamics of many open quantum systems are  described by stochastic master equations. In the discrete-time case, we recall the structure of the derived
 quantum filter governing the evolution of the density operator  conditioned to the measurement outcomes. We then describe the structure of the corresponding particle quantum filters for estimating  constant parameter and we  prove their stability.  In the  continuous-time (diffusive)  case,  we propose a new  formulation of  these particle quantum filters. The interest of this new formulation is first to prove stability, and also to provide an efficient algorithm preserving,  for any discretization step-size, positivity of the quantum  states and parameter classical probabilities. This algorithm is tested on experimental data  to estimate  the detection efficiency  for  a superconducting qubit whose fluorescence field is measured using a heterodyne detector.

\end{abstract}
\section{Introduction}

Parameter estimation in hidden Markov models is a well established subject (see, e.g., \cite{MoulinesBook2005}).  Twenty years ago Mabuchi \cite{Mabuc1996QaSOJotEOSPB}
has proposed  maximum likelihood methods    to estimate Hamiltonian parameters.  Later on, Gambetta and Wiseman~\cite{GambeW2001PRA} have given  a first  formulation of particle filtering techniques for classical parameter estimation in open quantum systems. This formulation has been analyzed  in~\cite{ChaseG2009PRA} via  an embedding   in the standard quantum filtering formalism.
Recently Negretti and M\o lmer~\cite{NegreM2013NJoP} have exploited this embedding to derive the general equations of a particle quantum  filter for systems governed by stochastic master equations  driven by Wiener processes (diffusive case). In these contributions,  realistic simulations illustrate the interest of such filters  for the  estimation of continuous parameters. In~\cite{KatoY2013}, similar  filters are used  for purely discrete parameters in order to discriminate between different topologies of  quantum networks. The Bayesian  parameter estimation used in the  measurement-based feedback experiment reported  in~\cite{Brakhane2012}  is in fact  a special case of  particle quantum filtering when  the quantum states remain diagonal in the energy-level basis, reduce to populations and  classical probabilities.

The contribution of this paper is twofold: with theorem~\ref{thm:stabilityGeneral}, we show that particle quantum filters are  always stable processes; with lemma~\ref{lem:SMEfilterPiPartialKraus}, we propose and justify   a new positivity preserving formulation in the diffusive case.  This formulation is shown to provide an efficient algorithm  for precisely estimating the detection efficiency from experimental  heterodyne measurements of the fluorescence field that is emitted by a superconducting qubit~\cite{CampagneEtAlPRL2014}. The statistics of the measurement outcomes generated by this  system cannot be described by  classical probabilities since the density operators at various times do not commute. As far as we know, this is the first time that  a particle quantum filter is applied to an experiment~\cite{Campagne2015} whose measurement statistics are ruled by non-commutative quantum   probabilities.

Section~\ref{sec:discrete} is devoted to the discrete-time formulation. The specific structure of Markov models describing open-quantum systems is presented. Then particle quantum filters are detailed and shown to be always stable (theorem~\ref{thm:stabilityGeneral}). Finally, the link with MaxLike approach and the case of multiple measurement records are addressed. In section~\ref{sec:continuous}, a positivity preserving formulation of particle quantum filters  is proposed for diffusive systems. The mathematical justifications of this formulation is  given in lemma~\ref{lem:SMEfilterPiPartialKraus}. In section~\ref{sec:experiment}, the  numerical  algorithm underlying lemma~\ref{lem:SMEfilterPiPartialKraus}  is   applied  on experimental data  from which  the detection efficiency  is estimated and compared to an existing calibration protocol.

\section{Discrete-time formulation} \label{sec:discrete}

\subsection{Markov models}
In the sequel, $\mathcal{H}$ is the finite-dimensional Hilbert space of the system and expectation values are denoted by the symbol $\EE{.}$.
In this section, time is indexed by the integer $k=0,1,\ldots$
The measurement outcome  at $k$  is  denoted by $y_k$. It corresponds to a classical  output signal.   We limit ourselves to the case where each $y_k$ can take a finite set of values $y_k\in\{1,\ldots,m\}$, $m$ being a positive integer (for continuous values of $y$,  see  section~\ref{sec:continuous}). We denote by $\rho_k$ the density operator at time-step $k$ (an Hermitian operator on
$\mathcal{H}$ such that $\tr{\rho_k} = 1$, $\rho_k \ge 0$).  It  corresponds  to the conditional quantum  state at time $k$ knowing the initial condition $\rho_0$  and the past outcomes $y_{1},\ldots,y_{k}$. According to the law of quantum mechanics, $\rho_{k}$ is related  to $\rho_{k-1}$ via the following Markov process (see, e.g.,~\cite{wiseman-milburnBook}) corresponding to a Davies instrument~\cite{daviesBook1976} in a  discrete context:
\begin{equation} \label{eq:markov-chain-r}
  \rho_k = \frac{\K_{y_k}(\rho_{k-1})}{\tr{\K_{y_k}(\rho_{k-1})}}
\end{equation}
where the super-operator $\rho\mapsto \K_{y}(\rho)$ depends on $y$,  is  linear and completely positive. It  admits the following Kraus representation
$ \K_{y}(\rho)= \sum_{\mu} \Mmu^{y}\rho (\Mmu^{y})^\dag$ where the  operators on $\mathcal{H}$, $(\Mmu^y)$, satisfy
$\sum_{\mu,y}(\Mmu^{y})^\dag\Mmu^y =\Id$ with  $\Id$  the identity operator. Moreover the probability $\PP{y_k=y | \rho_0,y_1,\ldots,y_{k-1}}$ to detect $y_k$  knowing the past outcomes  and the initial state $\rho_0$, depends only on $\rho_{k-1}$ (Markov property) and is given by
$$
  \PP{y_k = y ~\big|~ \rho_{k-1}}= \tr{\K_{y}(\rho_{k-1})}
  .
$$
Notice that $\EE{\rho_{k}~\big|~\rho_{k-1}} = \K(\rho_{k-1})$ where $\K(\rho)= \sum_{y} \K_{y}(\rho)=\sum_{\mu,y}\Mmu^y\rho(\Mmu^y)^\dag$ is a Kraus map (a quantum channel)  since it is not only completely positive  but also  trace preserving: $\tr{\K(\rho)}=\tr{\rho}$. In the sequel, $\K_{y}$ is  called a partial Kraus map since it is not trace preserving in general:  $\tr{\K_y(\rho)} \leq \tr{\rho}$. See, e.g., \cite{dotsenko-et-al:PRA09,AminiSDSMR2013A} for a detailed  construction of such $\K_y$  based on positive operator value measures (POVM)  and left stochastic matrices modeling measurement uncertainties and decoherence.

Now, we consider that the partial Kraus maps  $(\K_y)_{y=1,\ldots,m}$ can depend on time $k$, $(K_{y,k})$,   and on some physical parameters, grouped in the scalar or vectorial time-invariant $p$, $(K_{y,k}^p)$, whose exact value $\hp$ may not be known with a sufficient precision, and whose estimation is the subject of this paper. Here, we consider the case where the only reliable resource of information is some independent series of measurement outcomes,  $(y_k)_{k=1,\ldots, T}$, associated to a   \emph{quantum trajectory} of duration $T$.  Starting from the exact quantum state $\rhob_0$ and the exact parameter value
$\hp$, the exact quantum state trajectory  $(\rhob_k)_{k=1,\ldots, T}$ is given by  the following Markov process:
\begin{equation}\label{eq:exact-filter}
\rhob_k = \frac{\K_{y_k, k}^{\hp}(\rhob_{k-1})}{\tr{\K_{y_k, k}^{\hp}(\rhob_{k-1})}}
\end{equation}
with the following  probability of outcome $y_k$  knowing $\rhob_{k-1}$ and $\hp$:
$$
\PP{y_k = y ~\big|~ \rhob_{k-1},\hp } = \tr{\K_{y, k}^{\hp}(\rhob_{k-1})}
.
$$

\subsection{ Particle quantum  filters}

The parameter estimation method described  in~\cite{GambeW2001PRA,ChaseG2009PRA,NegreM2013NJoP} for continuous-time quantum trajectories admits the following discrete-time formulation.
When the exact parameter value $\hp$ and the initial state $\overline{\rho}_0$ are unknown, one can still resort  to the approximate filter corresponding to its a priori estimate value $p$, with partial Kraus maps $\K_{y_k, k}^p$, an initial guess for  $\rho_0$ and following states $\rho_k^{p}$ satisfying $\rho_k^{p} = \frac{\K_{y_k, k}^{p}(\rho_{k-1}^{p})}{\tr{\K_{y_k, k}^{p}(\rho_{k-1}^{p})}}$. Here, the measurement outcomes $(y_k)_{k=1,\ldots, T}$  correspond to   the  hidden state Markov chain defined in~\eqref{eq:exact-filter} and involving the actual value $\hp$ of the parameter.

Assume that  the initial information of the  true parameter value $\hp$ is that it can take only  two different values $a$ or $b$.  This initial  uncertainty on the value of $\hp$ can be taken into account by using an extended density operator, denoted $\xi=\text{diag}(\xi^a,\xi^b)$, block diagonal, where the first block $\xi^a$  corresponds to $p = a$, and the second block $\xi^b$  to $p = b$.  The evolution of each block is then handled with the corresponding partial  Kraus maps $(\K^a_{y, k})$ and $(\K^b_{y, k})$ forming  extended partial  Kraus maps $\X_{y,k}= \text{diag}\big(\K^a_{y, k}, \K^b_{y, k}\big)$ between   block diagonal density operators on the Hilbert space $\mathcal{H}\times\mathcal{H}$:
\begin{equation}\label{eq:Xi_def}
\X_{y, k}:~\xi
\mapsto  \text{diag}(\K^a_{y, k}(\xi^a), \K^b_{y, k}(\xi^b))
.
\end{equation}
The associated extended quantum filter reads:
\begin{equation}\xi_k = \frac{\X_{y_k,k}(\xi_{k-1})}{\tr{\X_{y_k,k}(\xi_{k-1})}}.\label{eq:Q_filter}\end{equation}

For $p\in\{a,b\}$, the probability that $p=\hp$ at step $k$ knowing the initial quantum  state $\overline{\rho}_0$ and  initial parameter probability $(\pi_0^a,\pi_0^b)$ reads $\pi_k^p=\tr{\xi^p_k}$. Indeed, $\pi_k^a+\pi_k^b=1$ since $\tr{\xi}=\tr{\xi^a} + \tr{\xi^b} =1$, and  $\xi_0=\text{diag}(\pi_0^a\overline{\rho}_0,\pi_0^b\overline{\rho}_0) $. If the  initial information on the parameter value is only  its belonging to $\{a,b\}$, then  $\pi_0^a=\pi_0^b=1/2$.

Instead of using  $\xi=\text{diag}(\xi^a,\xi^b)$ itself, we decompose its terms into products of probabilities $\pi^p$ and density operators $\rho^p=\xi^p/\pi^p$. Then Eq.~(\ref{eq:Q_filter}) reads
\begin{equation} \label{eq:ParticleFilter}
  \left\{
   \begin{array}{l}
   \rho_k^p = \frac{\K_{y_k,k}^p(\rho_{k-1}^p)}{\tr{\K_{y_k,k}^p(\rho_{k-1}^p)}}
   \\
   \pi_k^p = \frac{\tr{\K_{y_k,k}^p(\rho_{k-1}^p)}~ \pi_{k-1}^p}
      {\sum_{p' \in\{a,b\}} \tr{\K_{y_k,k}^{p'}(\rho_{k-1}^{p'})}~ \pi_{k-1}^{p'} }
   \end{array}
  \right.
\end{equation}
for $p\in\{a,b\}$. In the sequel, we will identify  the filter state $\xi$ with  $(\rho^a,\rho^b,\pi^a,\pi^b)$.

We have the following stability result based on~\cite{Rouch2011ACITo,somaraju-et-al:acc2012} and relying on the fidelity $F(\rho,\rho')\in[0,1]$ between two density operators $\rho$ and $\rho'$ defined here as the square of the usual fidelity function used in quantum information~\cite{nielsen-chang-book}:
$$
F(\rho,\rho')=  \trr{\sqrt{\sqrt{\rho}\rho'\sqrt{\rho}}}.
$$
\begin{thm}\label{thm:stability}
Take an arbitrary initial quantum state $\overline{\rho}_0$ and a parameter value $\hp$. Consider the quantum Markov process~\eqref{eq:exact-filter} producing  the measurement record $y_{k}$, $k\geq 0$. Assume that the constant  parameter $\hp$ can only take two different values, $a$ and $b$.  Consider  the  particle  (quantum) filter~\eqref{eq:ParticleFilter} initialized with  $\rho^a_0=\rho^b_0=\rho_0$ ($\rho_0$ any density operator)  and $(\pi^a_0,\pi^b_0)\in[0,1]^2$ with $\pi^a_0+\pi^b_0=1$.
Then  $F(\rhob,\rho^{\hp})$ and $\pi^{\hp}F(\rhob,\rho^{\hp})$ are sub-martingales of the Markov process~\eqref{eq:exact-filter} and~\eqref{eq:ParticleFilter}
of state
$(\rhob,\rho^a,\rho^b,\pi^a,\pi^b)$:
\end{thm}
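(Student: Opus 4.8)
The plan is to derive both assertions from one fidelity inequality, applied first to the system itself and then to the extended ``parameter'' system. The roles of $a$ and $b$ being symmetric, I take $\hp=a$, so that the record is generated by the conditional law $p_y:=\tr{\K^a_{y,k}(\rhob_{k-1})}$ of $y_k$, and I condition on the $\sigma$-algebra $\mathcal F_{k-1}$ generated by the initial data and $y_1,\dots,y_{k-1}$, with respect to which $\rhob_{k-1},\rho^a_{k-1},\rho^b_{k-1},\pi^a_{k-1},\pi^b_{k-1}$ are all measurable.

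First I would isolate the following contraction lemma. Let $(\mathcal L_y)_{y=1,\dots,m}$ be partial Kraus maps on a Hilbert space $\mathcal K$ with $\sum_y\mathcal L_y$ trace preserving, let $\sigma,\tau$ be density operators on $\mathcal K$, and put $p_y=\tr{\mathcal L_y(\sigma)}$, $q_y=\tr{\mathcal L_y(\tau)}$. Writing $\mathcal F$ for the usual (non-squared) fidelity, so $F=\mathcal F^2$, the claim is
\begin{equation*}
\sum_{y}\ p_y\,\mathcal F\!\Big(\tfrac{\mathcal L_y(\sigma)}{p_y},\tfrac{\mathcal L_y(\tau)}{q_y}\Big)^2\ \ge\ \mathcal F(\sigma,\tau)^2 ,
\end{equation*}
with the convention that a summand is dropped when $q_y=0$. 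To prove it I would use that $\mathcal E:\rho\mapsto\bigoplus_y\mathcal L_y(\rho)$ is a quantum channel (CPTP because $\sum_y\mathcal L_y$ is), so the data-processing inequality for fidelity gives $\mathcal F(\mathcal E(\sigma),\mathcal E(\tau))\ge\mathcal F(\sigma,\tau)$; since fidelity is additive over the orthogonal blocks and homogeneous of degree $1/2$ in each argument, $\mathcal F(\mathcal E(\sigma),\mathcal E(\tau))=\sum_y\sqrt{p_yq_y}\,\mathcal F(\sigma_y,\tau_y)$ with $\sigma_y=\mathcal L_y(\sigma)/p_y$, $\tau_y=\mathcal L_y(\tau)/q_y$; Cauchy--Schwarz bounds this by $\big(\sum_yq_y\big)^{1/2}\big(\sum_yp_y\mathcal F(\sigma_y,\tau_y)^2\big)^{1/2}$; and $\sum_yq_y=\tr{\tau}=1$. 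Squaring closes the argument.

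Then the first sub-martingale property is the lemma with $\mathcal L_y=\K^a_{y,k}$, $\sigma=\rhob_{k-1}$, $\tau=\rho^a_{k-1}$: by \eqref{eq:exact-filter}--\eqref{eq:ParticleFilter} and $y_k\sim p_y$, the left-hand side is exactly $\EE{F(\rhob_k,\rho^a_k)\mid\mathcal F_{k-1}}$ and the right-hand side is $F(\rhob_{k-1},\rho^a_{k-1})$. For the second property I would first use homogeneity to write $\pi^a_kF(\rhob_k,\rho^a_k)=\mathcal F(\rhob_k,\pi^a_k\rho^a_k)^2=\mathcal F(\rhob_k,\xi^a_k)^2$, then embed the true state as the block operator $\eta_k:=\text{diag}(\rhob_k,0)$ on $\mathcal H\times\mathcal H$; additivity of fidelity over the blocks gives $\mathcal F(\eta_k,\xi_k)=\mathcal F(\rhob_k,\xi^a_k)+\mathcal F(0,\xi^b_k)=\mathcal F(\rhob_k,\xi^a_k)$, so $\pi^a_kF(\rhob_k,\rho^a_k)=\mathcal F(\eta_k,\xi_k)^2$. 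Now $\X_{y,k}(\eta_{k-1})=\text{diag}(\K^a_{y,k}(\rhob_{k-1}),0)$ has trace $p_y$, so $\eta$ obeys the normalized recursion driven by $\X_{y_k,k}$ of \eqref{eq:Xi_def} with $y_k$ still distributed as $\tr{\X_{y,k}(\eta_{k-1})}=p_y$, while $\xi$ obeys \eqref{eq:Q_filter} with the same maps; both states have trace one and $\sum_y\X_{y,k}$ is trace preserving, so this is again the configuration of the lemma with $\mathcal L_y=\X_{y,k}$, $\sigma=\eta_{k-1}$, $\tau=\xi_{k-1}$, and it yields $\EE{\mathcal F(\eta_k,\xi_k)^2\mid\mathcal F_{k-1}}\ge\mathcal F(\eta_{k-1},\xi_{k-1})^2$.

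The main obstacle is the contraction lemma itself — specifically, choosing the channel $\mathcal E$ and arranging the Cauchy--Schwarz step so that the two normalizing traces collapse to $1$; once that is in hand, the $\pi^{\hp}$-weighted statement costs nothing, being just the plain statement for the extended system. A secondary point to address is the degenerate case $\tr{\K^a_{y_k,k}(\rho^a_{k-1})}=0$ for a realized $y_k$, which would make the particle filter ill-defined; this is ruled out as soon as $\text{supp}(\rhob_{k-1})\subseteq\text{supp}(\rho^a_{k-1})$, a containment preserved by \eqref{eq:exact-filter}--\eqref{eq:ParticleFilter} and guaranteed, e.g., by choosing $\rho_0$ of full rank, so it may be assumed throughout.
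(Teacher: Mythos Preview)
Your proof is correct and follows the same architecture as the paper: apply a fidelity sub-martingale inequality to $(\rhob,\rho^{\hp})$ for the first claim, then embed the true state as $\text{diag}(\rhob,0)$ on $\mathcal H\times\mathcal H$ (the paper's $\xib$, your $\eta$) and apply the same inequality to the extended pair for the second. The only difference is that the paper invokes this inequality as a black box from \cite[theorem~IV.1]{somaraju-et-al:acc2012}, whereas you supply a self-contained derivation via the data-processing inequality for fidelity combined with Cauchy--Schwarz.
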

When $\rho_0=\rhob_0$,  we have $\rho^{\hp}\equiv\rhob$,
$F(\rhob,\rho^{\hp})=1$. Thus $\pi^{\hp}$ is a sub-martingale
$$
\EE{\pi^{\hp}_k~\big|~\rhob_{k-1},\xi_{k-1}} \geq \pi^{\hp}_{k-1}
$$
This means that, in practice, the component of $\pi$ associated to the true value of the parameter tends to increase.
\begin{proof}
The fact that $F(\rhob,\rho^{\hp})$ is a sub-martingale is a direct consequence of~\cite[theorem IV.1]{somaraju-et-al:acc2012}: $(\rhob,\rho^{\hp})$ is the state of the following quantum Markov chain
 $$
 \rhob_k = \frac{\K_{y_k, k}^{\hp}(\rhob_{k-1})}{\tr{\K_{y_k, k}^{\hp}(\rhob_{k-1})}}
 , \quad
 \rho^{\hp}_k = \frac{\K_{y_k, k}^{\hp}( \rho^{\hp}_{k-1})}{\tr{\K_{y_k, k}^{\hp}( \rho^{\hp}_{k-1})}}
 $$
 with initial state $(\rhob_0,\rho_0)$ and  measurement outcome $y_k$  whose  probability
 $\PP{y_k = y ~\big|~ \rhob_{k-1} } = \tr{\K_{y, k}^{\hp}(\rhob_{k-1})}$ depends only on $\rhob_{k-1}$.

 For instance, assume that $\hp=a$. Denote by $\xib$ the  state of the quantum filter~\eqref{eq:Q_filter} initialized with $\xib_0=\text{diag}(\rhob_0,0)$. Then
 $\xib\equiv(\rhob,0)$  and thus $(\xib,\xi)$ is solution of the  extended Markov chain
 $$
 \xib_k = \frac{\X_{y_k, k}(\xib_{k-1})}{\tr{\X_{y_k, k}(\xib_{k-1})}}
 , \quad
 \xi_k = \frac{\X_{y_k, k}( \xi_{k-1})}{\tr{\X_{y_k, k}( \xi_{k-1})}}
 $$
 with   measurement outcome $y_k$  of  probability
 $\PP{y_k = y ~\big|~ \xib_{k-1} } = \tr{\X_{y, k}(\xib_{k-1})}$ depending only on $\xib_{k-1}$.
Thus according to~\cite[theorem IV.1]{somaraju-et-al:acc2012}, $F(\xib,\xi)$ is a sub-martingale. Due to the block structure of $\xib=\text{diag}(\rhob,0)$ and
$\xi=\text{diag}(\pi^a\rho^a,\pi^b\rho^b)$, we have $F(\xib,\xi)=\pi^a F(\rhob,\rho^a)$.
\end{proof}

Extension of theorem~\ref{thm:stability} to an arbitrary number $r$ of parameter values  is given below, the proof being very similar and not detailed here.
\begin{thm}\label{thm:stabilityGeneral}
Take an arbitrary initial quantum state $\overline{\rho}_0$ and parameter value $\hp$. Consider the quantum Markov process~\eqref{eq:exact-filter} producing  the measurement record $y_{k}$, $k\geq 0$. Assume that the   parameter $\hp$  belongs to a set of  $r$ different values $(p_l)_{l=1,\ldots,r}$. Take, for $l=1,\ldots,r$,  the  particle  quantum  filter
\begin{equation*}
  \left\{
   \begin{array}{l}
   \rho_k^{p_l} = \frac{\K_{y_k,k}^{p_l}(\rho_{k-1}^{p_l})}{\tr{\K_{y_k,k}^{p_l}(\rho_{k-1}^{p_l})}}
   \\
   \pi_k^{p_l} = \frac{\tr{\K_{y_k,k}^{p_l}(\rho_{k-1}^{p_l})}~ \pi_{k-1}^{p_l}}
      {\sum_{j=1}^r \tr{\K_{y_k,k}^{{p_{j}}}(\rho_{k-1}^{{p_{j}}})}~ \pi_{k-1}^{{p_{j}}} }
   \end{array}
  \right.
\end{equation*}
initialized with  $\rho^{{p_l}}_0=\rho_0$ ($\rho_0$ any density operator)  and $(\pi^{p_1}_0,\ldots,\pi^{p_r}_0)\in[0,1]^r$ with $\sum_{j} \pi^{p_{j}}_0=1$.

Then  $F(\rhob,\rho^{\hp})$ and $\pi^{\hp}F(\rhob,\rho^{\hp})$ are sub-martingales of the Markov process driven  by~\eqref{eq:exact-filter} and of
state
$(\rhob,\rho^{p_1},\ldots,\rho^{p_r},\pi^{p_1},\ldots,\pi^{p_r})$:
\end{thm}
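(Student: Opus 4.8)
The plan is to run the argument of Theorem~\ref{thm:stability} essentially unchanged, the only modification being that the extended, block-diagonal density operator now carries $r$ blocks rather than two. Throughout, I fix the true parameter value and relabel it $\hp = p_{l}$ for some index $l\in\{1,\ldots,r\}$; the choice of $l$ is arbitrary, so it suffices to treat one such $l$.

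For the first assertion, that $F(\rhob,\rho^{\hp})$ is a sub-martingale, I would note that it follows immediately from~\cite[theorem IV.1]{somaraju-et-al:acc2012} applied to the pair $(\rhob,\rho^{\hp})$. Indeed, the hypothesis $\rho^{p_l}_0=\rho_0$ gives $\rho^{\hp}_0=\rho_0$; both $\rhob$ and $\rho^{\hp}$ are propagated by the \emph{same} partial Kraus maps $\K^{\hp}_{y_k,k}$ followed by renormalisation; and the outcome law $\PP{y_k=y\mid\rhob_{k-1},\hp}=\tr{\K^{\hp}_{y,k}(\rhob_{k-1})}$ depends only on $\rhob_{k-1}$. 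Hence $(\rhob,\rho^{\hp})$ is precisely a quantum Markov chain of the type covered by that theorem.

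For the second assertion I would introduce the $r$-block objects $\xi=\text{diag}(\pi^{p_1}\rho^{p_1},\ldots,\pi^{p_r}\rho^{p_r})$ on $\mathcal{H}^{\times r}$ and the extended partial Kraus maps $\X_{y,k}=\text{diag}(\K^{p_1}_{y,k},\ldots,\K^{p_r}_{y,k})$, exactly as in~\eqref{eq:Xi_def} and~\eqref{eq:Q_filter}; one checks, as in the two-value case, that the particle filter of the statement is equivalent to $\xi_k=\X_{y_k,k}(\xi_{k-1})/\tr{\X_{y_k,k}(\xi_{k-1})}$ under the identification $\pi^{p_j}=\tr{\xi^{p_j}}$, $\rho^{p_j}=\xi^{p_j}/\pi^{p_j}$. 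Then I would propagate, with the same $\X_{y_k,k}$, a comparison state $\xib$ started at $\xib_0=\text{diag}(0,\ldots,\rhob_0,\ldots,0)$ with $\rhob_0$ in the $l$-th slot. Because the blocks never couple, $\xib_k=\text{diag}(0,\ldots,\rhob_k,\ldots,0)$ for all $k$, and the key identity $\tr{\X_{y,k}(\xib_{k-1})}=\tr{\K^{\hp}_{y,k}(\rhob_{k-1})}$ shows that the measurement record generated by $\xib$ has exactly the law~\eqref{eq:exact-filter}. Thus $(\xib,\xi)$ is again a quantum Markov chain whose outcome probabilities are determined by $\xib$ alone, and~\cite[theorem IV.1]{somaraju-et-al:acc2012} applies once more to give that $F(\xib,\xi)$ is a sub-martingale.

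The last step is to relate $F(\xib,\xi)$ to $\pi^{\hp}F(\rhob,\rho^{\hp})$ via the block structure. For block-diagonal $\rho=\text{diag}(\sigma_1,\ldots,\sigma_r)$ and $\rho'=\text{diag}(\tau_1,\ldots,\tau_r)$, the operators $\sqrt{\rho}$, $\sqrt{\rho}\rho'\sqrt{\rho}$ and its square root are all block-diagonal, so $\tr{\sqrt{\sqrt{\rho}\rho'\sqrt{\rho}}}=\sum_i\tr{\sqrt{\sqrt{\sigma_i}\tau_i\sqrt{\sigma_i}}}$. Applying this with $\xib$, whose only non-zero block is the $l$-th, every term vanishes except $i=l$, which equals $\sqrt{\pi^{\hp}}\,\tr{\sqrt{\sqrt{\rhob}\rho^{\hp}\sqrt{\rhob}}}$; squaring gives $F(\xib,\xi)=\pi^{\hp}\,\trr{\sqrt{\sqrt{\rhob}\rho^{\hp}\sqrt{\rhob}}}=\pi^{\hp}F(\rhob,\rho^{\hp})$, which is therefore a sub-martingale. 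I expect the only genuinely delicate point to be the bookkeeping needed to confirm that~\cite[theorem IV.1]{somaraju-et-al:acc2012} applies verbatim on $\mathcal{H}^{\times r}$ — namely that the extended Kraus operators satisfy $\sum_{\mu,y}(\cdot)^\dag(\cdot)=I_{\mathcal{H}^{\times r}}$ and that $\sum_y\X_{y,k}$ is trace preserving on block-diagonal operators, both immediate from the diagonal form — together with the observation that $\xib_k$ remains a bona fide normalised density operator, which holds since $\tr{\xib_k}=\tr{\rhob_k}=1$ at every step.
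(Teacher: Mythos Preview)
Your proposal is correct and follows exactly the approach the paper indicates: the paper does not give a separate proof of Theorem~\ref{thm:stabilityGeneral} but simply states that the proof is ``very similar'' to that of Theorem~\ref{thm:stability}, and you have carried out precisely that similar argument by replacing the two-block construction with an $r$-block one, invoking \cite[theorem IV.1]{somaraju-et-al:acc2012} twice, and reading off $F(\xib,\xi)=\pi^{\hp}F(\rhob,\rho^{\hp})$ from the block structure. Your added verification of the block-diagonal fidelity decomposition and of the hypotheses needed for the cited theorem on $\mathcal{H}^{\times r}$ are welcome details the paper omits.
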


Extension  to a continuum of values for $p$ of such particle  quantum filters and of the above stability result  can be done without major difficulties.

\subsection{Connexion with MaxLike methods}
 Assume that the initial  density operator is well known: $\rho_0=\rhob_0$. It is possible to choose as an estimation of $\hp$, among $a$ or $b$, the value $p$ that maximises the probability $\pi_k^p$ after a certain amount of time $k$. This method is actually a \emph{maximum-likelihood} based technique. The multiplicative increment at time $k$ for $\pi_k^a$ is $\tr{\K^a_{y_k, k}(\rho_{k-1}^a)}$, which is equal to $\PP{y_k~\Big|~\rho_0, y_1, \ldots, y_{k-1}, \hp = a}$.
From this observation, we deduce that
\begin{equation*}
\pi_k^a = \frac{\pi_0^a}{C_k} \times
\prod_{l = 1}^{k} \PP{y_l ~\Big|~\rho_0, y_1, \ldots, y_{l-1}, \hp = a},
\end{equation*}
where  $C_k$ is a normalization factor to ensure $\pi_k^a + \pi_k^b=1$. Remarking that the probability of the measurement outcomes  $(y_l)_{l \le k}$ is the probability of the measurement outcomes  $(y_l)_{l \le k-1}$ times the probability of $y_k$ conditionally to all prior measurements, one gets
\begin{equation*}
\pi_k^a = \frac{\pi_0^a}{C_k} \times
\PP{y_1, \ldots, y_{k} ~\Big|~\rho_0, \hp = a},
\end{equation*}
and similarly
\begin{equation*}
\pi_k^b = \frac{\pi_0^b}{C_k} \times
\PP{y_1, \ldots, y_{k} ~\Big|~\rho_0, \hp = b}.
\end{equation*}
Choosing as an estimate the value $a$ or $b$ whose associated component of $\pi$   tends towards $1$ thus amounts to choosing the parameter value that maximises the probability of the measurement outcomes $(y_1,\ldots,y_T)$.

\subsection{Multiple quantum trajectories} \label{ssec:multipleTrajectories}

Such particle quantum filtering techniques extend without difficulties   to $\nmax$ records (indexed by $n\in\{1,\ldots \nmax\}$) of measurement outcomes,   $(y_k^{(n)})_{k=1,\ldots, T_n}$ with possibly  different lengths $T_n$ and initial conditions $\rhob^{(n)}_0$. This extension consists in a concatenation of  the  $\nmax$ records into a single record $(\bar y_k)_{k=1,\ldots,T}$ with $T=\sum_{n=1}^{\nmax} T_n$ and
\begin{multline*}
 (\bar y_k)_{k=1,\ldots,T}=\\
 \Big( y_1^{(1)},\ldots,y_{T_1}^{(1)}, y_1^{(2)},\ldots,y_{T_2}^{(2)}, \quad \ldots \quad,
   y_1^{(\nmax)},\ldots,y_{T_{\nmax}}^{(\nmax)} \Big)
\end{multline*}
This record can be  associated to  a  single quantum trajectory of length $T$ of form~\eqref{eq:exact-filter}. First  initialize at $\rhob_0^{(1)}$. Then  for each $k$ equal to $T_1+\ldots+T_{n-1}$,  $\rho_{k+1}$ is reset to $\rhob_0^{(n)}$. This  can be done  by applying a reset  Kraus  map $\K^{\rhob_0^{(n)}}$   after the computation of $\rhob_{k+1}$ relying on outcome  $y_{T_{n-1}}^{(n-1)}$ and before  using  the outcome $y_1^{(n)}$.  For any density operator $\sigma$, it is simple to construct via its  spectral decomposition,  a Kraus map $\K^{\sigma}$ such that, for all density operator $\rho$, $\K^{\sigma}(\rho)=\sigma$.  With this trick $(\bar y_k)$ is associated to an effective single quantum trajectory of the form~\eqref{eq:exact-filter} where the partial Kraus maps $\K_{y,k}^{\hp}$ depend effectively  on the time step $k$   because of adding   these reset Kraus maps.

For the   particle quantum filter that is described in theorem~\ref{thm:stabilityGeneral} and associated to the record $(\bar y_k)$ ,   each $\rho^{(p_l)}_{k}$ is reset in a similar way at each time step $k=T_1+\ldots+T_{n-1}$  contrarily to  the   parameter probability $\pi^{(p_l)}_{k} $ that is not reset.

\section{Continuous-time  formulation} \label{sec:continuous}

\subsection{Diffusive stochastic master equations}
For a mathematical  and precise   description of  such diffusive models, see~\cite{BarchielliGregorattiBook}. We just recall here   the stochastic  master equation governing the  time evolution of the density operator $t\mapsto \rho_t$
\begin{multline}\label{eq:SME}
\dif{\rho}_t  = \Big(-i[H, \rho_t] + \sum_{\nu=1}^m \mathcal{D}_\nu(\rho_t)\Big)\dif{t}\\
+ \sum_{\nu=1}^m \sqrt{\eta_\nu}~\Big( L_\nu \rho_t+ \rho_t L_\nu^\dag - \tr{L_\nu \rho_t+ \rho_t L_\nu^\dag} \rho_t\Big)\dif{W}_t^\nu
\end{multline}
where $H$ is the Hamiltonian, an Hermitian operator on $\mathcal{H}$ ($\hbar=1$ here) and where,  for each $\nu\in\{1,\ldots,m\}$,
\begin{itemize}

\item $\mathcal{D}_\nu$ is the  Lindblad super-operator
$$
\mathcal{D}_\nu(\rho)= L_\nu \rho L_\nu^\dagger - \tfrac{1}{2}(L_\nu^\dagger L_\nu \rho + \rho L_\nu^\dagger L_\nu) ;
$$

\item $L_\nu$ is an operator on $\mathcal{H}$, which is not necessarily Hermitian and which is associated to the measurement/decoherence channel  $\nu$ ;

\item $\eta_\nu\in[0,1]$ is the detection efficiency ($\eta_\nu=0$ for decoherence channel and $\eta_\nu>0$ for measurement channel) ;

\item $W_t^\nu$ is a Wiener process (independent of the other Wiener processes $W_t^{\mu\neq\nu}$) describing the quantum fluctuations of the  continuous  output signal  $t\mapsto y^\nu_t$. It is related to $\rho_t$ by
\begin{equation}\label{eq:SMEoutput}
  \dif{y}_t^\nu = \sqrt{\eta_\nu}~\tr{L_\nu \rho_t+ \rho_t L_\nu^\dag}~ \dif{t} + \dif{W}_t^\nu
.
\end{equation}

\end{itemize}

\subsection{Partial Kraus map formulation}

We introduce here another formulation of~\eqref{eq:SME} that mimics the discrete-time formulation~\eqref{eq:exact-filter}.
 This formulation is inspired of   subsection 4.3.3 of~\cite{haroche-raimondBook06}, subsection  entitled "Physical interpretation of the master equation".
 In~\eqref{eq:SME}, $\dif{\rho}_t$ stands for $\rho_{t+\dif{t}} -\rho_{t}$. It can thus be written as
\begin{multline*}
\rho_{t+\dif{t}}  = \rho_t +  \Big(-i[H, \rho_t] + \sum_{\nu=1}^m \mathcal{D}_\nu(\rho_t)\Big)\dif{t}\\
+ \sum_{\nu=1}^m \sqrt{\eta_\nu}~\Big( L_\nu \rho_t+ \rho_t L_\nu^\dag - \tr{L_\nu \rho_t+ \rho_t L_\nu^\dag} \rho_t\Big)\dif{W}_t^\nu
\end{multline*}
i.e., $\rho_{t+\dif{t}}$ is an algebraic expression involving $\rho_t$, $\dif{t}$ and $\dif{W}_t^\nu$.
  With this form, it is not obvious that $\rho_{t+\dt}$ remains a density operator if $\rho_t$ is a density operator. The following lemma provides another formulation based on  \Ito~calculus showing directly that $\rho_{t+\dt}$ remains a density operator. In~\cite{RouchR2015PRA}, similar   formulations  are   proposed without the mathematical  justifications given below  and are  tested in  realistic simulations  of  measurement-based feedback scheme.

\begin{lem}\label{lem:SMEPartialKraus}
Consider the stochastic differential equation~\eqref{eq:SME} with an initial condition $\rho_0$, which is a non-negative  Hermitian operator  of trace one.
Then it also reads:
$$
\rho_{t+\dif{t}} = \frac{\K_{dy_t,dt}(\rho_t)}{\tr{\K_{dy_t,dt}(\rho_t)}},
$$
where $dy_t$ stands for $(dy^1_t,\ldots,dy^m_t)$, and where $\K_{\Dy,\Dt}$ is a partial Kraus map depending on  $\Dy\in\RR^m$ and $\Dt >0$ given by
$$
\K_{\Dy,\Dt}(\rho)= M_{\Dy,\Dt}~\rho~ M_{\Dy,\Dt}^\dag + \sum_{\nu=1}^m (1-\eta_\nu) \Dt ~L_\nu \rho L_\nu^\dag
$$
and $M_{\Dy,\Dt}$ is the following operator on $\mathcal{H}$
$$
M_{\Dy,\Dt}= \Id  - \left(iH + \sum_{\nu =1}^{m} L_\nu^\dagger L_\nu /2\right) \Dt + \sum_{\nu=1}^m \sqrt{\eta_\nu} \Dy^\nu~ L_\nu
$$
\end{lem}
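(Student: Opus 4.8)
The plan is to verify the stated identity by a direct \Ito-expansion of the right-hand side up to order $\dif{t}$, and to compare the resulting increment $\rho_{t+\dif{t}}-\rho_t$ with the one prescribed by~\eqref{eq:SME} after substituting the output relation~\eqref{eq:SMEoutput}, $\dif{y}_t^\nu=\sqrt{\eta_\nu}\tr{L_\nu\rho_t+\rho_t L_\nu^\dag}\dif{t}+\dif{W}_t^\nu$. Throughout one uses the \Ito~rules $\dif{W}_t^\nu\dif{W}_t^\mu=\delta_{\nu\mu}\dif{t}$, $\dif{W}_t^\nu\dif{t}=0$, $(\dif{t})^2=0$, together with the fact that the drift part of $\dif{y}_t^\nu$ is $O(\dif{t})$, so that $\dif{y}_t^\nu\dif{y}_t^\mu=\delta_{\nu\mu}\dif{t}$ and $\dif{y}_t^\nu\dif{t}=0$ as well. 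Writing $M_{dy_t,dt}=\Id+A$, the operator $A$ decomposes into a drift part $-\big(iH+\sum_\nu L_\nu^\dag L_\nu/2\big)\dif{t}$ of order $\dif{t}$ and a stochastic part $\sum_\nu\sqrt{\eta_\nu}\,\dif{y}_t^\nu L_\nu$ of order $\sqrt{\dif{t}}$.

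First I would expand $M_{dy_t,dt}\,\rho\,M_{dy_t,dt}^\dag=\rho+A\rho+\rho A^\dag+A\rho A^\dag$. Since $H$ is Hermitian and the $\dif{y}_t^\nu$ are real, $A\rho+\rho A^\dag$ contributes $-i[H,\rho]\dif{t}-\tfrac{1}{2}\sum_\nu(L_\nu^\dag L_\nu\rho+\rho L_\nu^\dag L_\nu)\dif{t}+\sum_\nu\sqrt{\eta_\nu}\,\dif{y}_t^\nu(L_\nu\rho+\rho L_\nu^\dag)$, while in $A\rho A^\dag$ only the product of the two stochastic parts survives to order $\dif{t}$ and, by the \Ito~rule, equals $\sum_\nu\eta_\nu\dif{t}\,L_\nu\rho L_\nu^\dag$. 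Adding the remaining piece $\sum_\nu(1-\eta_\nu)\dif{t}\,L_\nu\rho L_\nu^\dag$ of $\K_{dy_t,dt}$ completes the Lindblad dissipators and gives the linear, unnormalized filtering equation
\[
\K_{dy_t,dt}(\rho)=\rho+\Big(-i[H,\rho]+\sum_\nu\mathcal{D}_\nu(\rho)\Big)\dif{t}+\sum_\nu\sqrt{\eta_\nu}\,\dif{y}_t^\nu\,\big(L_\nu\rho+\rho L_\nu^\dag\big).
\]
Taking traces and using $\tr{[H,\rho]}=0$ and $\tr{\mathcal{D}_\nu(\rho)}=0$ gives $\tr{\K_{dy_t,dt}(\rho)}=1+\sum_\nu\sqrt{\eta_\nu}c_\nu\,\dif{y}_t^\nu$ with $c_\nu:=\tr{L_\nu\rho+\rho L_\nu^\dag}$, hence, expanding the reciprocal to second order, $\tr{\K_{dy_t,dt}(\rho)}^{-1}=1-\sum_\nu\sqrt{\eta_\nu}c_\nu\,\dif{y}_t^\nu+\sum_\nu\eta_\nu c_\nu^2\,\dif{t}$.

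Then I would multiply $\K_{dy_t,dt}(\rho)$ by this reciprocal, keep terms up to order $\dif{t}$ (again using $\dif{y}_t^\nu\dif{y}_t^\mu=\delta_{\nu\mu}\dif{t}$), and finally substitute $\dif{y}_t^\nu=\sqrt{\eta_\nu}c_\nu\,\dif{t}+\dif{W}_t^\nu$. The $\dif{t}$-order corrections produced by the normalization --- a term $\sum_\nu\eta_\nu c_\nu^2\,\dif{t}\,\rho$ and a cross term $-\sum_\nu\eta_\nu c_\nu\,\dif{t}\,(L_\nu\rho+\rho L_\nu^\dag)$ --- then cancel exactly against those produced by the drift of $\dif{y}_t^\nu$, leaving precisely $\rho_t+\big(-i[H,\rho_t]+\sum_\nu\mathcal{D}_\nu(\rho_t)\big)\dif{t}+\sum_\nu\sqrt{\eta_\nu}\big(L_\nu\rho_t+\rho_t L_\nu^\dag-c_\nu\rho_t\big)\dif{W}_t^\nu$, i.e.\ the increment dictated by~\eqref{eq:SME}. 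It then only remains to note that $\rho_{t+\dif{t}}$ is a density operator: for any $\dif{t}>0$ and $\eta_\nu\in[0,1]$ the map $\K_{dy_t,dt}$ is a sum of the completely positive maps $\rho\mapsto M_{dy_t,dt}\rho M_{dy_t,dt}^\dag$ and $\rho\mapsto(1-\eta_\nu)\dif{t}\,L_\nu\rho L_\nu^\dag$, hence completely positive, so $\K_{dy_t,dt}(\rho_t)$ is non-negative Hermitian whenever $\rho_t$ is and, since $\tr{\K_{dy_t,dt}(\rho_t)}>0$ almost surely, its normalization $\rho_{t+\dif{t}}$ is non-negative Hermitian of trace one; this manifest positivity, valid for any step-size, is exactly what the discretized algorithm of section~\ref{sec:experiment} exploits.

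I expect the main obstacle to be the \Ito~bookkeeping rather than any conceptual difficulty: one must take $\tr{\K_{dy_t,dt}(\rho)}^{-1}$ to second order, keep track of which products $\dif{y}_t^\nu\dif{y}_t^\mu$ contribute at order $\dif{t}$, and --- most importantly --- check that, once the drift of $\dif{y}_t^\nu$ is substituted, the nonlinear $\dif{t}$-corrections coming from the normalization cancel the ones coming from that drift; this cancellation is what converts the linear, unnormalized equation back into the nonlinear SME~\eqref{eq:SME}. Formally the whole computation is a finite algebraic manipulation; its rigorous content is the standard statement that the partial-Kraus recursion and~\eqref{eq:SME} have increments agreeing up to $o(\dif{t})$ and therefore, from the same initial condition, define the same \Ito~diffusion.
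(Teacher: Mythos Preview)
Your proposal is correct and follows essentially the same route as the paper's proof: a direct \Ito~expansion of $\K_{dy_t,dt}(\rho_t)$, computation of its trace and of the reciprocal to second order, and recognition of~\eqref{eq:SME} via $\dif{y}_t^\nu-\sqrt{\eta_\nu}c_\nu\dif{t}=\dif{W}_t^\nu$. The only differences are cosmetic: the paper writes out the case $m=1$ and defers general $m$ to the reader, whereas you carry out the multi-channel bookkeeping explicitly and append the (pertinent but logically extra) remark on complete positivity.
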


\begin{proof}
  Assume  that  $m=1$. Then,
  \begin{multline} \label{eq:SME-bis}
   \dif{\rho}_t  = \Big(-i[H, \rho_t] +  L\rho_t L^\dag - \tfrac{1}{2}(L^\dag L\rho_t + \rho_t L^\dag L)\Big)\dif{t}\\
+ \sqrt{\eta}~\Big( L\rho_t+ \rho_t L^\dag - \tr{L \rho_t+ \rho_t L^\dag} \rho_t\Big)\dif{W}_t
.
  \end{multline}
Using  It$\bar{\text{o}}$  rules, $\dy_t^2=\dt$. Hence, we have
\begin{multline*}
\K_{\dy_t,\dt}(\rho_t) =
\rho_t+  \sqrt{\eta}  (L\rho_t+\rho_t L^\dag) ~\dy_t
 \\
 +  \left( -i[H,\rho_t] + L\rho_t L^\dag  - \tfrac{1}{2} (L^\dag L\rho_t + \rho_t L^\dag L ) \right) ~ \dt
 .
\end{multline*}
Thus $\tr{\K_{\dy_t,\dt}(\rho_t)}= 1 +  \sqrt{\eta}  \tr{L\rho_t+\rho_t L^\dag} \dy_t$ and
\begin{multline*}
\frac{1}{\tr{\K_{\dy_t,\dt}(\rho_t)}}
=1- \sqrt{\eta}  \tr{L\rho_t+\rho_t L^\dag} \dy_t
\\+ \eta \trr{L\rho_t+\rho_t L^\dag} \dt
.
\end{multline*}
We get
\begin{multline*}
\frac{\K_{\dy_t,\dt}(\rho_t)}{\tr{\K_{\dy_t,\dt}(\rho_t)}} - \rho_t
 \\
 =  \sqrt{\eta}~\Big( L\rho_t+ \rho_t L^\dag - \tr{L \rho_t+ \rho_t L^\dag} \rho_t\Big) ~\dy_t
 \\
 + \left( -i[H,\rho_t] + L\rho_t L^\dag  - \tfrac{1}{2} (L^\dag L\rho_t + \rho_t L^\dag L ) \right) ~ \dt
 \\
 - \eta\tr{L \rho_t+ \rho_t L^\dag} \Big( L\rho_t+ \rho_t L^\dag - \tr{L \rho_t+ \rho_t L^\dag} \rho_t\Big) ~\dt
.
\end{multline*}
One recognizes~\eqref{eq:SME-bis} since  $\dy_t -\sqrt{\eta}\tr{L \rho_t+ \rho_t L^\dag}\dt = \dif{W}_t$.
For  $m>1$,  the computations are similar  and  not  detailed here.
\end{proof}

\subsection{Particle quantum filtering}

Assume  the system dynamics depends on a constant  parameter $p$ appearing either in the SME~\eqref{eq:SME} and/or in the output maps~\eqref{eq:SMEoutput}. As in section~\ref{sec:discrete}, assume that $p$ can take a finite number $r$ of values $p_1$, \ldots, $p_r$. Denote by $\rho^p_t$ the quantum state associated to~$p$:
\begin{equation}\label{eq:SMEp}
\dif{\rho}_t^p = \mathcal{L}^p(\rho^p_t) \dif{t}
+ \sum_{\nu=1}^m \mathcal{M}^p(\rho_t^p) \dif{W}_t^\nu
\end{equation}
where  the super-operators
\begin{multline*}
\mathcal{L}^p(\rho)=
  -i[H^p, \rho] \\
  + \sum_{\nu=1}^m L_\nu^p \rho (L_\nu^p)^\dagger - \frac{1}{2}((L_\nu^p)^\dagger L^p_\nu \rho + \rho (L^p_\nu)^\dagger L^p_\nu)
\end{multline*}
and
\begin{multline*}
\mathcal{M}^p(\rho)=
  \sqrt{\eta^p_\nu}~\Big( L^p_\nu \rho+ \rho (L^p_\nu)^\dag - \tr{L^p_\nu \rho+ \rho (L^p_\nu)^\dag} \rho\Big)
\end{multline*}
depend on $p$ since the operators $L^p_\nu$ and the efficiencies $\eta_\nu^p$ could  depend  on $p$.
The  $m$ outputs that are associated to the parameter $p$ then read:
\begin{equation}\label{eq:SMEoutputp}
  dy_t^\nu = C^p_\nu(\rho_t^p) \dt + \dif{W}_t^\nu
\end{equation}
for $\nu=1,\ldots, m$, and where:
$$
 C^p_\nu(\rho) =  \sqrt{\eta^p_\nu}\tr{L^p_\nu \rho+ \rho (L^p_\nu)^\dag}
 .
 $$
With these notations, the particle quantum filter introduced in~\cite{GambeW2001PRA} and further developed and  analyzed in~\cite{ChaseG2009PRA,NegreM2013NJoP}  reads as follows.
For each $l\in\{1,\ldots,r\}$, $\rho^{p_l}_t$ is governed by the quantum filter:
\begin{multline}\label{eq:SMEfilter}
\dif{\rho}_t^{p_l} = \mathcal{L}^{p_l}(\rho^{p_l}_t) \dif{t}
\\+ \sum_{\nu=1}^m \mathcal{M}^{p_l}(\rho_t^{p_l})~ \big(dy_t^\nu -C^{p_l}_\nu(\rho_t^{p_l}) \dt\big),
\end{multline}
and the parameter probability $\pi_t^{p_l}$ is governed by:
\begin{multline}\label{eq:SMEPi}
\dif{\pi}_t^{p_l} = \pi^{p_l}_t
   \left( \sum_{\nu=1}^m
   \left( C^{p_l}_\nu(\rho_t^{p_l})- \overline{C}^\nu_t \right)
   \left(dy_t^\nu - \overline{C}^\nu_t \dt\right)
   \right),
\end{multline}
where
$\overline{C}^\nu_t= \sum_{j=1}^r
 \pi^{p_{j}}_t  C^{p_{j}}_\nu(\rho_t^{p_{j}})$.

Here again, the lemma below provides another formulation of this particle quantum filter that mimics the discrete-time setting of theorem~\ref{thm:stabilityGeneral}.

\begin{lem}\label{lem:SMEfilterPiPartialKraus}
For each $l\in\{1,\ldots,r\}$, the  particle quantum filter~\eqref{eq:SMEfilter} and~\eqref{eq:SMEPi} can be formulated as follows:
\begin{equation*}
\left\{
   \begin{array}{rl}
 \rho^{p_l}_{t+\dif{t}} &= \frac{\K^{p_l}_{dy_t,dt}(\rho^{p_l}_t)}{\tr{\K^{p_l}_{dy_t,dt}(\rho^{p_l}_t)}}
 \\
 \pi^{p_l}_{t+\dif{t}} &= \frac{\tr{\K^{p_l}_{dy_t,dt}(\rho^{p_l}_t)}~ \pi_{t}^{p_l}}
      {\sum_{j=1}^r\tr{\K^{p_{j}}_{dy_t,dt}(\rho^{p_{j}}_t)}~ \pi_{t}^{p_{j}} }
\end{array}
\right.
\end{equation*}
where $dy_t$ stands for $(dy^1_t,\ldots,dy^m_t)$ and where $\K^{p}_{\Dy,\Dt}$ is a partial Kraus map depending on $p$,  $\Dy\in\RR^m$ and $\Dt >0$ given by:
$$
\K^{p}_{\Dy,\Dt}(\rho)= M^{p}_{\Dy,\Dt} ~\rho~ \left(M^{p}_{\Dy,\Dt}\right)^\dag + \sum_{\nu=1}^m (1-\eta^{p}_\nu) \Dt ~L^{p}_\nu \rho (L_\nu^{p})^\dag,
$$
and $M^{p}_{\Dy,\Dt}$ is the following operator on $\mathcal{H}$:
$$
M^{p}_{\Dy,\Dt}= \Id  - \left(iH^{p} + \sum_{\nu =1}^{m} (L^{p}_\nu)^\dagger L^{p}_\nu /2\right) \Dt + \sum_{\nu=1}^m \sqrt{\eta^{p}_\nu} \Dy^\nu~ L^{p}_\nu.
$$
\end{lem}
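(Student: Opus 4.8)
The plan is to build directly on the computation already carried out in the proof of Lemma~\ref{lem:SMEPartialKraus} and to treat the two components of the particle filter --- the conditional quantum state $\rho^{p_l}$ and the classical weight $\pi^{p_l}$ --- in turn. The crucial observation is that the manipulation in that proof is purely algebraic once the It\^o increment rules $dy_t^\nu\,dy_t^\mu=\delta_{\nu\mu}\,dt$, $dy_t^\nu\,dt=0$ and $dt^2=0$ are invoked; it never uses the fact that $dy_t^\nu$ is tied to the state through an output equation. In the present setting $dy_t$ is the \emph{observed} record, but it still has the same quadratic covariation structure (the drift in the output equation contributes nothing to $(dy_t^\nu)^2$, and distinct channels are driven by independent Wiener processes), so the same expansion applies with each Hamiltonian, jump operator and efficiency carrying the label $p_l$.

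First I would expand $\K^{p_l}_{dy_t,dt}(\rho^{p_l}_t)=M^{p_l}_{dy_t,dt}\,\rho^{p_l}_t\,(M^{p_l}_{dy_t,dt})^\dag+\sum_\nu(1-\eta^{p_l}_\nu)\,dt\,L^{p_l}_\nu\rho^{p_l}_t(L^{p_l}_\nu)^\dag$ to first order in $dt$, exactly as for $m=1$ in Lemma~\ref{lem:SMEPartialKraus}: the contribution $\sqrt{\eta^{p_l}_\nu\eta^{p_l}_\mu}\,dy_t^\nu dy_t^\mu\,L^{p_l}_\nu\rho^{p_l}_t(L^{p_l}_\mu)^\dag$ collapses, via $dy_t^\nu dy_t^\mu=\delta_{\nu\mu}dt$, to $\sum_\nu\eta^{p_l}_\nu\,dt\,L^{p_l}_\nu\rho^{p_l}_t(L^{p_l}_\nu)^\dag$, which combines with the $(1-\eta^{p_l}_\nu)$ term to restore the full dissipator, so that $\K^{p_l}_{dy_t,dt}(\rho^{p_l}_t)=\rho^{p_l}_t+\mathcal{L}^{p_l}(\rho^{p_l}_t)\,dt+\sum_\nu\sqrt{\eta^{p_l}_\nu}\,dy_t^\nu\,(L^{p_l}_\nu\rho^{p_l}_t+\rho^{p_l}_t(L^{p_l}_\nu)^\dag)$. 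Since $\tr{\mathcal{L}^{p_l}(\rho^{p_l}_t)}=0$, this gives $\tr{\K^{p_l}_{dy_t,dt}(\rho^{p_l}_t)}=1+\sum_\nu C^{p_l}_\nu(\rho^{p_l}_t)\,dy_t^\nu$. Dividing, expanding $1/(1+\sum_\nu C^{p_l}_\nu dy_t^\nu)$ to second order and collecting the $dt$ and $dy_t^\nu$ monomials makes the innovation $dy_t^\nu-C^{p_l}_\nu(\rho^{p_l}_t)\,dt$ appear in front of $\mathcal{M}^{p_l}_\nu(\rho^{p_l}_t)$, which is precisely~\eqref{eq:SMEfilter}.

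For the weight, I would insert the trace formula into the proposed ratio. Its denominator becomes $\sum_{j}(1+\sum_\nu C^{p_{j}}_\nu(\rho^{p_{j}}_t)\,dy_t^\nu)\,\pi_t^{p_{j}}=1+\sum_\nu\overline{C}^\nu_t\,dy_t^\nu$ because $\sum_j\pi_t^{p_j}=1$. Writing $u=\sum_\nu C^{p_l}_\nu(\rho^{p_l}_t)\,dy_t^\nu$ and $v=\sum_\nu\overline{C}^\nu_t\,dy_t^\nu$ and expanding $\pi^{p_l}_t(1+u)/(1+v)=\pi^{p_l}_t(1+u-v-uv+v^2)$ with $uv=\sum_\nu C^{p_l}_\nu(\rho^{p_l}_t)\,\overline{C}^\nu_t\,dt$ and $v^2=\sum_\nu(\overline{C}^\nu_t)^2\,dt$, the increment $\dif{\pi}_t^{p_l}$ collapses to $\pi^{p_l}_t\sum_\nu(C^{p_l}_\nu(\rho^{p_l}_t)-\overline{C}^\nu_t)(dy_t^\nu-\overline{C}^\nu_t\,dt)$, which is~\eqref{eq:SMEPi}. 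The one point I would state carefully --- and essentially the only genuine subtlety --- is the legitimacy of this It\^o bookkeeping when $dy_t$ is the exogenous measurement signal rather than the driving noise of the equation it enters; this is justified by the covariation structure noted in the first paragraph. Everything else, namely the $m>1$ accounting and the tracking of which monomials survive at order $dt$, is routine and parallels the single-channel computation of Lemma~\ref{lem:SMEPartialKraus}.
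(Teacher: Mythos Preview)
Your proof is correct and follows exactly the approach the paper indicates: the paper does not detail the argument but merely states that it parallels the proof of Lemma~\ref{lem:SMEPartialKraus} via It\^o calculus, and you have carried out precisely that computation, including the additional bookkeeping for the weight equation~\eqref{eq:SMEPi}. Your explicit remark that the quadratic-covariation rules $dy_t^\nu\,dy_t^\mu=\delta_{\nu\mu}\,dt$ remain valid when $dy_t$ is the exogenous measurement record (since the drift contributes nothing to the covariation) is the one subtlety worth flagging, and you have handled it correctly.
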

The proof  is very similar to the proof of lemma~\ref{lem:SMEPartialKraus}. It relies on simple but slightly tedious computations exploiting \Ito~calculus. Due to space limitation, this proof is not detailed here.  This lemma, combined with the mathematical machineries  exploited in~\cite{AminiPR2014}, opens  the way to an  extension to  the diffusive case  of   theorem~\ref{thm:stabilityGeneral}.

\section{An experimental validation} \label{sec:experiment}

The estimation of the detection efficiency  is conducted on a superconducting qubit  whose fluorescence field is measured using a heterodyne detector~\cite{Roch2012,Hatridge2013}. For the detailed  physics of this  experiment, see~\cite{CampagneEtAlPRL2014, Campagne2015}. The  Hilbert space $\mathcal{H}$ is $\mathbb{C}^2$. The system  dynamics is   described by a  stochastic master equation of the form~\eqref{eq:SME}, with $m=3$: $\eta_1=\eta_2=\eta$ is the total efficiency of the heterodyne measurement of the fluorescence signal; $\eta_3=0$ corresponds to an unmonitored dephasing channel:
$$
L_1= \sqrt{\tfrac{1}{2T_1}} \frac{X-iY}{2}, \quad  L_2=i L_1, \quad L_3=\sqrt{\tfrac{1}{2T_\phi}} Z
$$
where $X$,  $Y$ and $Z$ are the usual Pauli  matrices~\cite{nielsen-chang-book}. The time constants $T_1=4.15~\mu$s and $T_\phi=35~\mu$s are  determined independently using Rabi or Ramsey protocols, which is not the case of $\eta$. Using a calibration of the average resonance fluorescence signal, the measured vacuum noise fluctuations provide a first estimation of $\eta=0.26\pm0.02$.

To get a more precise estimation of $\eta$, we have measured $\nmax=3\times 10^6$ quantum trajectories of $10~\mu$s, starting from the same known initial state $\rhob_0=\frac{\Id+X}{2}$. The sampling time $\Dt$ is equal to $0.20~\mu$s. For each trajectory,   the  measurement sample at  time $t_k= k\Dt$, $k\in\{1,\ldots, 50\}$,  corresponds to the two quadratures of the fluorescence field $\Dy^1_k=y^1_{k\Dt}-y^{1}_{(k-1)\Dt}$ and $\Dy^2_k=y^2_{k\Dt}-y^{2}_{(k-1)\Dt}$.  From  lemma~\ref{lem:SMEfilterPiPartialKraus}, we derive a simple recursive algorithm where  $(\dy_t)$ and $\dt$ are replaced by
$(\Dy_k)$ and $\Dt$.  Moreover, as explained in subsection~\ref{ssec:multipleTrajectories}, the $3\times 10^6$ quantum  trajectories  are concatenated into a single one.

The estimation is done by taking some pattern values $\eta_1$, $\eta_2$, ..., $\eta_r$,  assuming that the real value $\heta$ is sufficiently close to one of them. We begin with a first estimation that keeps a big interval between each possible value $\eta_i$ of $\eta$, in order to validate our estimation scheme. We then sharpen this estimation by reducing the intervals between each value $\eta_i$, until arriving to a level of accuracy after which no distinct discrimination can be performed.
The results are given at figures~\ref{fig:fig1a}, \ref{fig:fig1b} and \ref{fig:fig1c}. They give  the following refinement of the initial calibration: $\heta = 0.2425 \pm 0.005$. On each of the figures, the X-axis represents the number of trajectories after which we look at the parameter probabilities $\pi_k^{\eta_i}$ and the Y-axis displays these probabilities.

\begin{figure}
	\includegraphics[width=\columnwidth]{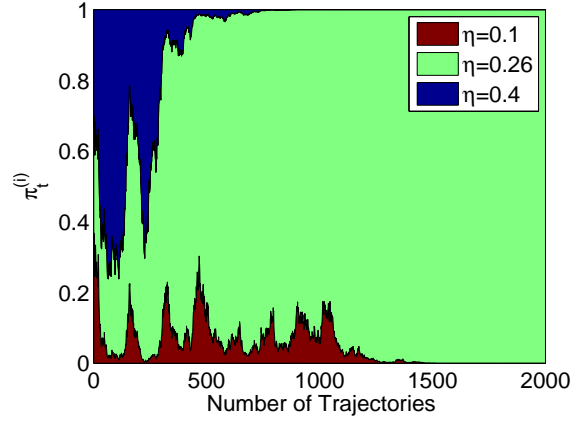}
	\caption{First estimation, with pattern values $\eta_1 = 0.10$, the parameter value $\eta_2 = 0.26$ close to $\heta$, and $\eta_3 = 0.40$. Only the first $2000$ trajectories are needed  to select $\heta\approx 0.26$ and discard $0.10$ and $0.40$. }
	\label{fig:fig1a}
\end{figure}
\begin{figure}
		\includegraphics[width=\columnwidth]{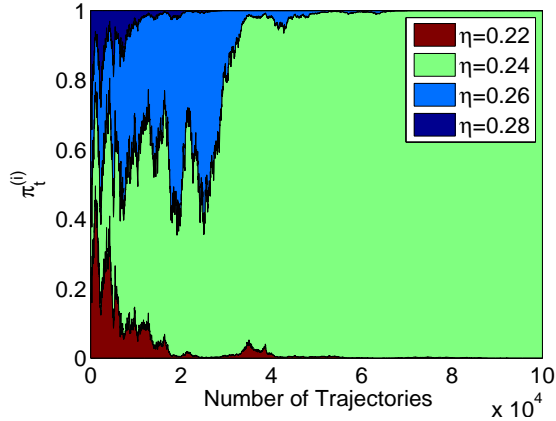}
		\caption{Second estimation, realized with more narrow intervals between each pattern values. We notice that $\heta$ is actually closer to $0.24$ than $0.26$, the calibrated  value, and that the number of trajectories required for the discrimination has drastically increased to $1\times 10^5$.}
		\label{fig:fig1b}
\end{figure}
\begin{figure}
		\includegraphics[width=\columnwidth]{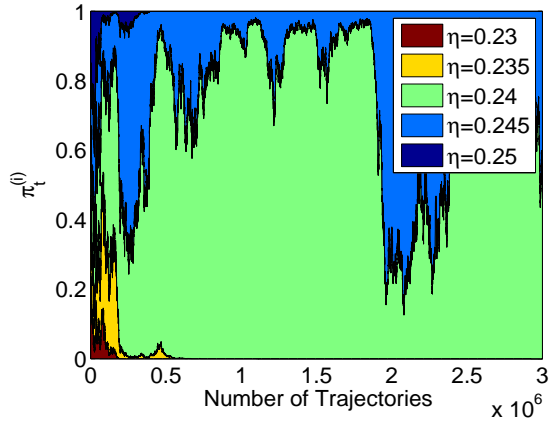}
		\caption{Last estimation, with very narrow intervals. We use all the trajectories available, i.e. $3\times10^6$ trajectories.  Filter does not converge to a distinct choice between $0.240$ and $0.245$.}
		\label{fig:fig1c}
\end{figure}

\section{Conclusion}

 We have shown  that particle quantum filtering is always a stable process. We have   proposed  an original  positivity preserving formulation for systems governed by diffusive stochastic master equation. A first validation on experimental data confirms the interest of the resulting parameter algorithm. This positivity preserving algorithm appears to be robust enough to  cope with sampling time of more than 2\% of  the characteristic time attached to the measurement.
 The convergence  characterization of such estimation scheme remains to be done despite the fact they are always stable.

\section*{Acknowledgment}
The authors  thank Michel Brune,  Igor Dotsenko and  Jean-Michel Raimond for  useful discussions on quantum filtering and  parameter estimation  in the discrete-time case.

\end{document}